\newtheorem{thm}[subsection]{Theorem}
\newtheorem{defn}[subsection]{Definition}
\newtheorem{lemma}[subsection]{Lemma}
\newtheorem{prop}[subsection]{Proposition}
\theoremstyle{definition}
\newtheorem{rmk}[subsection]{Remark}
\newtheorem{eg}[subsection]{Example}
\numberwithin{equation}{section}
\begin{document}

\title[Nonnegative Moore-Penrose inverses  of unbounded gram operators ]{Nonnegative Moore-Penrose inverses  of unbounded gram operators  }

\author{T. Kurmayya}
\address{Department of Mathematics\\National Institute of Technology\\
Warangal, Telangana\\,
India  506004.}

\email{kurmayya@nitw.ac.in}
\author{G. Ramesh}
\address{Department of Mathematics\\I. I. T. Hyderabad, ODF Estate\\ Yeddumailaram, Telangana \\ India-502 205.}

\email{rameshg@iith.ac.in}

\subjclass[2010]{ }

\date{19-09-2014}

\keywords{closed operator, cone, gram operator}

\begin{abstract}
In this paper we derive necessary and sufficient conditions for the
nonnegativity of Moore-Penrose inverses of unbounded Gram operators
between real Hilbert spaces. These conditions include statements on
acuteness  of certain closed convex cones. The main result
generalizes the existing result for bounded operators \cite[Theorem
3.6]{kurmisivakumargram}.

\end{abstract}
\maketitle
\section{Introduction}
Monotonicity of Gram matrices and Gram operators has received a lot
of attention in recent years. This has been primarily motivated by
applications in convex optimization problems.

A real square  matrix $T$ is called monotone if $x\geq 0$, whenever
$Tx\geq 0$. Here $x=(x_i)\geq 0$ means that $x_i\geq 0$ for all $i$.
Collatz  \cite{cz} has shown that a matrix is monotone if and only
if  it is invertible and the inverse is nonnegative.
 Gil
gave sufficient conditions on the entries of an infinite matrix $T$
in order for $T^{-1}$ to be nonnegative
 \cite{gil2}.
An extension of the notion of monotonicity to characterize
nonnegativity of generalized inverses in the finite dimensional
 case seems to have been first accomplished by   Mangasarian \cite{mg}.  Berman and Plemmons
 \cite{bp3} made extensive contributions to
nonnegative generalized
 inverses by proposing various notions of monotonicity.
 The book by Berman and Plemmons \cite{bp3} contain numerous examples
of applications of nonnegative generalized inverses that include
Numerical Analysis and linear economic models.

The question of monotonicity and their relationships to
nonnegativity of generalized inverses in the infinite dimensional
setting, have been first taken up by Sivakumar (\cite{s1} and
\cite{s2}). Three other types of operator monotonicity were studied
later by Kulkarni and Sivakumar \cite{ks}.
For applications of nonnegative Moore-Penrose
inverses of operators to the solution of linear systems of equations
defined by operators between infinite dimensional spaces, we refer
to Kammerer and Plemmons (\cite[Section 6]{kamerer}).

 There is a well
known result by Cegielski that characterizes nonnegative
invertibility of Gram matrices in terms of obtuseness (or acuteness)
of certain polyhedral cones. (See for instance  \cite[Lemma 1.6]{ce}). The results of Cegielski were generalized by Kurmayya
and Sivakumar \cite{kurmisivakumargram} in two directions; from
finite dimensional real Euclidean spaces to  infinite dimensional
real Hilbert spaces and from classical inverses to Moore-Penrose
inverses.

       In this paper we consider linear operators (not necessarily bounded) between real Hilbert spaces  and
       obtain necessary and sufficient conditions for the nonnegativity of Moore-Penrose inverses
       of Gram operators in terms of acuteness of certain closed convex cones. This can be achieved by taking cones in the
       domain of the Gram operator. Because of this slight modification, we
       observe that there is a slight change in some of the existing results (see Lemmas \ref{uinco} and \ref{acutepositivity}, and the
       condition  (\ref{2nd}) in Theorem  \ref{main}). Our results
       generalizes the existing results due to Kurmayya and
       Sivakumar \cite{kurmisivakumargram} and the related results (See for instance Lemma 1.6, \cite{ce}) in the literature.

The paper is organized as follows. In section 2 we introduce some
basic notations, definitions and results. In section 3, we present
some preliminary results and prove the main theorem. In section 4, we illustrate the main theorem with some examples.

\section{Notations and Preliminary results}
Throughout the article we consider infinite dimensional real Hilbert
spaces which will be denoted by $H, H_1,H_2$ etc . The inner product
and the induced norm are denoted  by  $\langle,\rangle$ and $||.||$
respectively.

A subset $K$ of a Hilbert space $H$ is called cone if, $(i)$\;
$x,y\in K\Rightarrow x+y\in K$ and $(ii)$\;  $x\in K$, $\alpha
\in\mathbb{R}$, $\alpha \geq 0\Rightarrow \alpha x\in K$. For a
subset $K$ of a Hilbert space $H$, the dual of $K$ denoted $K^*$ is
defined as $K^*= \{x\in H \colon \langle x,t\rangle \geq 0,
\text{for all}\; t\in K \}$ and $K^{**}=(K^*)^*.$ Note that in
general, $K^{**}=\overline{K},$ where the bar denotes the closure of
$K.$
If $H=\ell^2$, the Hilbert space of all square summable real
sequences and $K=\ell^2_+=\{x\in \ell^2\colon x_i\geq 0, \forall
i\}$, then $K^*=\ell^2_+$ and hence $K^{**}=\ell^2_+.$ A cone $C$ is
said to be acute if  $\langle x,y\rangle\geq 0,$ for all $x,y\in C$.

Let $T$ be a linear operator with domain $D(T)$, a subspace of $H_1$
and taking values in $H_2$,  then the graph $G(T)$ of $T$ is defined
by  $G(T):={\{(x,Tx):x\in D(T)}\}\subseteq H_1\times H_2$. If $G(T)$
is closed, then $T$ is called a closed operator. If $D(T)$ is dense
in $H_1$, then $T$ is called a densely defined operator. For a
densely defined operator there exists a unique linear operator
$T^*:D(T^*)\rightarrow H_1$, where
\begin{equation*}
D(T^*):={\{y\in H_2: \text{the functional} \; x\rightarrow \langle Tx,y\rangle \, \text{for
all}\, x\in D(T)\,\text{is continuous}}\}
\end{equation*}
and $\langle Tx,y\rangle =\langle x,T^*y\rangle$ for all $x\in D(T)$
and $y\in D(T^*)$. This operator is called the adjoint of $T$.
Note that $T^*$ is always closed whether or not $T$ is closed.

The set of all closed operators between $H_1$ and $H_2$ is denoted
by $\mathcal C(H_1,H_2)$ and $\mathcal C(H):=\mathcal C(H,H)$. By
the closed graph Theorem \cite{rud}, an everywhere defined closed
operator is bounded.  Hence the domain of an unbounded closed
operator is a proper subspace of a Hilbert space. For, $T\in
\mathcal C(H_1,H_2)$, the null space and the range space of
$T$ are denoted by $N(T)$ and $R(T)$ respectively and the space
$C(T):=D(T)\cap N(T)^\bot$ is called the carrier of $T$. In fact,
$D(T)=N(T)\oplus^\bot C(T)$ \cite[page 340]{ben}. For a closed subspace $M$ of $H$, we denote the orthogonal projection on $H$
with range $M$ by $P_M$.

If $T\in \mathcal C(H_1,H_2)$ and $S\in \mathcal C(H_2,H_3)$, then $D(ST)={\{x\in D(T):Tx\in D(S)}\}$ and $(ST)(x)=S(Tx)$ for all $x\in D(ST)$.

If $S$ and $T$ are closed operators  with the property that
$D(S)\subseteq D(T)$ and $Sx=Tx$ for all $x\in D(S)$, then $S$ is
called the restriction of $T$ and $T$ is called an extension of $S$. For the details we refer to \cite{goldberg,kato,rud}.

Next, we recall some of the definitions and important results that
we use throughout the article.
\begin{defn}
For a linear map $T:H_1\longrightarrow H_2$, the operator $T^*T$ is
said to be the Gram operator of $T$.
\end{defn}
\begin{defn}\label{geninv}(Moore-Penrose Inverse)\cite[definition 2, page 339]{ben}
Let $T\in \mathcal C(H_1,H_2)$ be densely defined. Then there exists
a unique densely defined operator $T^\dagger \in \mathcal
C(H_2,H_1)$ with domain $D(T^\dagger)=R(T)\oplus ^\bot R(T)^\bot$
and has the following properties:
\begin{enumerate}
\item $TT^\dagger y=P_{\overline{R(T)}}\;y, \;\text{for all}\;y\in D(T^\dagger)$

\item $T^\dagger Tx=P_{N(T)^\bot} \;x, \;\text{for all}\;x\in D(T)$

\item $N(T^\dagger)=R(T)^\bot$.
\end{enumerate}
This unique operator $T^\dagger$ is called the \textit{Moore-Penrose inverse} of $T$.\\
The following property of $T^\dagger$ is also well known. \noindent
For every $y\in D(T^\dagger)$, let \begin{equation*}
L(y):=\Big\{x\in D(T):
||Tx-y||\leq ||Tu-y||\quad \text{for all} \quad u\in D(T)\Big\}.
\end{equation*}
 Here any $u\in L(y)$ is called a \textit{least square solution} of the operator equation $Tx=y$. The vector
 $x=T^\dagger y\in L(y),\,||T^\dagger y||\leq ||u||\quad \text{for all} \quad u\in L(y)$
 and it is called the  \textit{least square solution of minimal norm}.
 A different treatment of $T^\dagger$ is given in \cite{ben},
 where it is called ``\textit{the Maximal Tseng generalized Inverse}".
\end{defn}

 We have the following equivalent definition:
 \begin{defn}\label{equivalentdefn}
  Let $P:=P_{\overline{R(T)}}$. If $y\in R(T)\oplus^\bot R(T)^\bot$, the equation
  \begin{equation}\label{genoperatoreq}
  Tx=Py
  \end{equation}
 always  has a solution. This solution is called a \textbf{least
 square solution}. If $x\in D(T)$ is a least square solution, then
 \begin{equation*}
 ||Tx-y||^2=||Py-y||^2=\displaystyle \min_{z\in D(T)}\, ||Tz-y||^2.
 \end{equation*}
 The unique vector with the minimal norm among all least square
 solutions, is called the \textit{least square solution of minimal
 norm} of the Equation \ref{genoperatoreq} and is given $x=T^\dagger
 y$.
 \end{defn}

Here we list  the properties of the Moore-Penrose inverse, which we
need to prove our main results.
\begin{thm}\cite[theorem 2, page 341]{ben}\label{propertiesofmpinverse}
Let $T\in \mathcal C(H_1,H_2)$ be densely defined. Then
\begin{enumerate}
\item $D(T^\dagger)=R(T)\oplus^\bot R(T)^\bot, \quad
N(T^\dagger)=R(T)^\bot=N(T^*)$
\item \label{rangeofmpi}$R(T^\dagger)=C(T)$
\item $T^\dagger$ is densely defined and $ T^\dagger \in \mathcal C(H_2,H_1)$
\item $T^\dagger$ is continuous if and only $R(T)$ is closed.

\item $T^{\dagger \dagger}=T$
\item $T^{* \dagger}=T^{\dagger *}$
\item $N(T^{* \dagger})=N(T)$
\item  $(T^*T)^\dagger =T^\dagger T^{*
\dagger}$ \label{grammpinverse}
\item  $(TT^*)^\dagger= T^{*
\dagger}T^\dagger$.
\end{enumerate}
\end{thm}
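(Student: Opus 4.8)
The plan is to funnel all nine statements through the \emph{bijective restriction} of $T$. First I would record the orthogonal decompositions valid for any closed densely defined $T$: $D(T)=N(T)\oplus^\bot C(T)$ with $\overline{C(T)}=N(T)^\bot$, and $H_2=\overline{R(T)}\oplus^\bot R(T)^\bot$ with $R(T)^\bot=N(T^*)$. Then set $T_0:=T|_{C(T)}\colon C(T)\to R(T)$. Since $G(T_0)=G(T)\cap\bigl(N(T)^\bot\times H_2\bigr)$ is an intersection of two closed subspaces, $T_0$ is a closed, injective operator whose range is $R(T)$, dense in $\overline{R(T)}$; hence $T_0^{-1}\colon R(T)\to C(T)$ is again closed, densely defined (into $\overline{R(T)}$) and injective. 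Reading Definition~\ref{geninv} (equivalently \ref{equivalentdefn}) off this picture, I would establish the working formula
\[
T^\dagger=T_0^{-1}\,P_{\overline{R(T)}},\qquad D(T^\dagger)=\{\,y\in H_2: P_{\overline{R(T)}}\,y\in R(T)\,\}=R(T)\oplus^\bot R(T)^\bot,
\]
and then derive everything from it.

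From the formula, (1) and (2) are immediate: $D(T^\dagger)$ is as displayed; $T^\dagger y=0$ iff $P_{\overline{R(T)}}y=0$, by injectivity of $T_0^{-1}$, so $N(T^\dagger)=R(T)^\bot=N(T^*)$; and $R(T^\dagger)=R(T_0^{-1})=C(T)$. For (3), density of $D(T^\dagger)$ follows from $\overline{D(T^\dagger)}\supseteq\overline{R(T)}+R(T)^\bot=H_2$, and closedness of $T^\dagger$ from closedness of $T_0^{-1}$ by a short sequence argument: if $y_n\to y$ and $T_0^{-1}P_{\overline{R(T)}}y_n\to x$, then $P_{\overline{R(T)}}y_n\to P_{\overline{R(T)}}y$, so $P_{\overline{R(T)}}y\in R(T)$ and $T^\dagger y=x$. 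For (4), if $R(T)$ is closed then $D(T^\dagger)=H_2$ and the closed graph Theorem applied to the (closed) operator $T^\dagger$ gives continuity; conversely a continuous closed operator has closed domain, whence $R(T)=D(T^\dagger)\cap\overline{R(T)}$ is closed. For (5), I would run the whole construction for $T^\dagger$ in place of $T$: (1)--(2) give $C(T^\dagger)=R(T)$ and $(T^\dagger)_0=T_0^{-1}$, hence $T^{\dagger\dagger}=T_0\,P_{N(T)^\bot}$, and comparing domains with $D(T)=N(T)\oplus^\bot C(T)$ identifies this with $T$.

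For (6) and (7) I would use $T^{**}=T$, valid since $T$ is closed and densely defined. Then (7) is just (1) applied to $T^*$: $N(T^{*\dagger})=R(T^*)^\bot=N(T^{**})=N(T)$. For (6) the plan is to verify directly that $T^{\dagger *}$---closed and densely defined because $T^\dagger$ is---satisfies the three defining relations of the Moore-Penrose inverse of $T^*$; the key point is to identify the bijective restriction of $T^*$ with the Hilbert-space adjoint of $T_0$, namely $T^*|_{C(T^*)}=(T_0)^*$ as operators between $\overline{R(T)}$ and $N(T)^\bot$, and then to match $T^{*\dagger}=\bigl((T^*)_0\bigr)^{-1}P_{N(T)^\bot}$ against $T^{\dagger *}=P_{\overline{R(T)}}\bigl(T_0^{-1}\bigr)^{*}$, using that $(T_0^{-1})^*=(T_0^*)^{-1}$ for the injective, dense-range operator $T_0$. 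Finally, for (8) and (9) I would invoke von Neumann's theorem that $T^*T$ is self-adjoint (so in particular closed and densely defined), with $N(T^*T)=N(T)$ and $\overline{R(T^*T)}=N(T)^\bot$, and then check that $T^\dagger T^{*\dagger}$, on its natural domain, satisfies properties (1)--(3) of Definition~\ref{geninv} with $T$ replaced by $T^*T$; uniqueness of the Moore-Penrose inverse then yields (8), and (9) follows by replacing $T$ by $T^*$ and invoking $T^{**}=T$ together with (6).

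The hard part will be the unbounded compositions in (8)--(9): one must show that the domain of $T^\dagger T^{*\dagger}$ is exactly $D\bigl((T^*T)^\dagger\bigr)$, not merely contained in it, which is precisely where one needs to know beforehand that $T^*T$ is self-adjoint and that $D(T^*T)$ is a core for $T$. The second delicate point is the step (6) from the adjoint to the Moore-Penrose inverse, since in the unbounded setting $T^{\dagger *}$ cannot be manipulated purely formally and one has to return to the least-squares characterization of Definition~\ref{equivalentdefn} to pin down its domain.
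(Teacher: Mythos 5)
The paper offers no proof of this theorem---it is quoted with a citation to Ben-Israel and Greville---so there is nothing internal to compare against; your reconstruction via the bijective restriction $T_0:=T|_{C(T)}$ and the formula $T^\dagger=T_0^{-1}P_{\overline{R(T)}}$ is exactly the ``maximal Tseng inverse'' construction of that reference, and your treatment of items (1)--(5) and (7) is sound as written. The two delicate points you yourself flag---identifying $D(T^\dagger T^{*\dagger})$ with $D\bigl((T^*T)^\dagger\bigr)$ in (8)--(9) via von Neumann's theorem, and the unbounded adjoint manipulations in (6), where one must justify $T^*|_{C(T^*)}=T_0^*$ and $(T_0^{-1})^*=(T_0^*)^{-1}$ and handle the composition with the bounded projection carefully---are indeed where the real work lies, and the routes you propose (uniqueness of the Moore--Penrose inverse, plus $T^{**}=T$) are the correct ones.
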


\begin{prop}\cite{ben}\label{nullspaces} Let $T\in \mathcal C(H_1,H_2)$ be densely
defined. Then
\begin{enumerate}
\item $N(T)=R(T^*)^\bot$ \item  $N(T^*)=R(T)^\bot$ \item
$N(T^*T)=N(T)$  and \item $\overline{R(T^*T)}=\overline {R(T^*)}$.
\end{enumerate}
\end{prop}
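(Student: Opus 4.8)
The plan is to derive all four statements from three ingredients: the defining identity $\langle Tx,y\rangle=\langle x,T^{*}y\rangle$ of the adjoint, the density of $D(T)$, and the structural facts available for a closed densely defined $T$ --- namely that $T^{*}$ is again closed and densely defined with $T^{**}=T$, and (von Neumann's theorem, see \cite{kato}) that $T^{*}T$ is densely defined and self-adjoint.

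For $(2)$ I would argue directly. If $y\in N(T^{*})$ then for every $x\in D(T)$ we get $\langle Tx,y\rangle=\langle x,T^{*}y\rangle=0$, so $y\in R(T)^{\bot}$. Conversely, if $y\in R(T)^{\bot}$ then $x\mapsto\langle Tx,y\rangle$ is the zero functional, hence continuous, so $y\in D(T^{*})$ and $\langle x,T^{*}y\rangle=0$ for all $x$ in the dense subspace $D(T)$, forcing $T^{*}y=0$. For $(1)$ I would apply $(2)$ with $T^{*}$ in place of $T$, which is legitimate since $T^{*}\in\mathcal C(H_{2},H_{1})$ is densely defined: this gives $N(T^{**})=R(T^{*})^{\bot}$, and $T^{**}=T$. (Alternatively, $(2)$ is contained in Theorem \ref{propertiesofmpinverse}$(1)$, and $(1)$ follows from the same statement applied to $T^{*}$.)

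For $(3)$, the inclusion $N(T)\subseteq N(T^{*}T)$ is immediate, since $Tx=0$ puts $Tx\in D(T^{*})$ and yields $T^{*}Tx=0$. For the reverse, take $x\in D(T^{*}T)$ with $T^{*}Tx=0$; since $Tx\in D(T^{*})$, the adjoint identity gives $||Tx||^{2}=\langle Tx,Tx\rangle=\langle x,T^{*}Tx\rangle=0$, so $x\in N(T)$. For $(4)$, one inclusion is free: $T^{*}Tx=T^{*}(Tx)\in R(T^{*})$ for $x\in D(T^{*}T)$, whence $\overline{R(T^{*}T)}\subseteq\overline{R(T^{*})}$. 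For the reverse I pass to orthogonal complements: by $(1)$, $\overline{R(T^{*})}=R(T^{*})^{\bot\bot}=N(T)^{\bot}$, while, $T^{*}T$ being densely defined and self-adjoint, $\overline{R(T^{*}T)}=N\big((T^{*}T)^{*}\big)^{\bot}=N(T^{*}T)^{\bot}=N(T)^{\bot}$ by $(3)$; hence the two closures coincide.

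The algebra here is routine; what genuinely requires care --- and is the main point where the unbounded setting departs from the bounded one --- is justifying the domain-theoretic facts: that $T$ closed and densely defined forces $T^{*}$ densely defined with $T^{**}=T$, and that $T^{*}T$ is self-adjoint on its natural dense domain. With those in hand, the proposition is the short chain of identities above.
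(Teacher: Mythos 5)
Your argument is correct, and it is the standard proof. The paper itself states this proposition without proof, citing Ben-Israel and Greville, so there is no in-paper argument to compare against; your derivation supplies exactly what the citation covers. You correctly isolate the two genuinely nontrivial inputs for the unbounded setting --- that $T$ closed and densely defined gives $T^{*}$ densely defined with $T^{**}=T$, and von Neumann's theorem that $T^{*}T$ is densely defined and self-adjoint --- and every step (the continuity of the zero functional in the converse direction of $(2)$, the use of $Tx\in D(T^{*})$ to write $\|Tx\|^{2}=\langle x,T^{*}Tx\rangle$ in $(3)$, and the passage to orthogonal complements via $\overline{R(S)}=N(S^{*})^{\bot}$ in $(4)$) is justified.
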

\begin{prop}\cite{ben,kato} \label{Minmod}
For a densely defined $T\in \mathcal C(H_1,H_2)$, the following
statements  are equivalent:
\begin{enumerate}
\item  $R(T)$ is closed
\item $R(T^*)$is closed
\item $R(T^*T)$ is closed. In this case, $R(T^*T)=R(T^*)$
\item $R(TT^*)$ is closed. In this case, $R(TT^*)=R(T)$.
\end{enumerate}
\end{prop}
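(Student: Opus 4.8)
The plan is to derive all four equivalences from the catalogue of Moore--Penrose properties in Theorem \ref{propertiesofmpinverse} together with the closed graph theorem, rather than from perturbation estimates. The workhorse will be the dictionary furnished by Theorem \ref{propertiesofmpinverse}(4): for a densely defined $S\in\mathcal C$, the range $R(S)$ is closed if and only if $S^\dagger$ is bounded, and in that case $D(S^\dagger)=R(S)\oplus^\bot R(S)^\bot$ is the whole space; thus ``$R(S)$ closed'' will be read interchangeably as ``$S^\dagger$ is everywhere defined and bounded'' (the implication ``everywhere defined $\Rightarrow$ bounded'' being the closed graph theorem, since $S^\dagger$ is closed by Theorem \ref{propertiesofmpinverse}(3)). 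I would apply this with $S$ equal to $T$, $T^*$, $T^*T$ and $TT^*$ in turn — each of these is closed and densely defined ($T^*$ since $T$ is closable, and $T^*T$, $TT^*$ by von Neumann's theorem, which is implicit in the paper anyway since their Moore--Penrose inverses are under discussion).

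For (1)$\Leftrightarrow$(2): if $R(T)$ is closed, then $T^\dagger$ is everywhere defined and bounded, hence so is its adjoint $(T^\dagger)^*$; by Theorem \ref{propertiesofmpinverse}(6) this adjoint equals $T^{*\dagger}$, so $(T^*)^\dagger$ is everywhere defined and bounded, i.e.\ $R(T^*)$ is closed, and the reverse implication is the same argument applied to $T^*$ (using $T^{**}=T$). For (1)$\Leftrightarrow$(3): by Theorem \ref{propertiesofmpinverse}(8), $(T^*T)^\dagger=T^\dagger T^{*\dagger}$. If $R(T)$ is closed then $R(T^*)$ is closed by the previous step, so $T^\dagger$ and $T^{*\dagger}$ are both everywhere defined and bounded, hence their composition is, hence $R(T^*T)$ is closed. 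Conversely, if $R(T^*T)$ is closed then $(T^*T)^\dagger$ is everywhere defined; since the composition convention gives $D(T^\dagger T^{*\dagger})\subseteq D(T^{*\dagger})$, this forces $D(T^{*\dagger})=H_1$, and $T^{*\dagger}$, being closed, is then bounded by the closed graph theorem, whence $R(T^*)$ — and therefore $R(T)$ — is closed. The equivalence (1)$\Leftrightarrow$(4) is verbatim the same argument with $(TT^*)^\dagger=T^{*\dagger}T^\dagger$ from Theorem \ref{propertiesofmpinverse}(9).

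The two ``in this case'' identities come almost for free once the equivalences are in hand: when all four conditions hold, Proposition \ref{nullspaces}(4) gives $R(T^*T)=\overline{R(T^*T)}=\overline{R(T^*)}=R(T^*)$, and the same proposition applied to $T^*$ (with $T^{**}=T$) gives $R(TT^*)=\overline{R(TT^*)}=\overline{R(T)}=R(T)$. I do not expect a genuine obstacle here; the only real care is domain bookkeeping for unbounded operators — checking that $T^*$, $T^*T$, $TT^*$ really are closed and densely defined so that the dictionary applies to them, and reading the identities of Theorem \ref{propertiesofmpinverse}(6),(8),(9) as equalities of (possibly unbounded) operators \emph{together with their domains}, since it is precisely the domain equality that transports the ``everywhere defined'' property across the compositions. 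An alternative route, if preferred, goes through the reduced minimum modulus $\gamma$: one shows $R(S)$ closed $\iff\gamma(S)>0$, together with $\gamma(T)=\gamma(T^*)$ and $\gamma(T^*T)=\gamma(T)^2$ — there the delicate identity is $\gamma(T^*T)=\gamma(T)^2$, which rests on $D(T^*T)$ being a core for $T$.
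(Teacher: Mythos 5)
Your argument is correct. Note first that the paper itself offers no proof of this proposition --- it is imported wholesale from Ben-Israel--Greville and Kato, where the standard arguments run through the closed range theorem and the reduced minimum modulus (essentially the ``alternative route'' you sketch at the end, with $\gamma(T)=\gamma(T^*)$ and $\|Tx\|^2=\langle T^*Tx,x\rangle$ doing the work). Your route is genuinely different: it stays entirely inside the Moore--Penrose calculus already recorded as Theorem \ref{propertiesofmpinverse}, which makes the proposition self-contained relative to this paper rather than an external import; the price is that all the weight now sits on reading items (6), (8), (9) of that theorem as equalities of unbounded operators \emph{together with their domains}. That reading is how the paper states them, and it is in fact correct for the maximal Tseng inverse (one can check directly that $D(T^\dagger T^{*\dagger})$ and $D((T^*T)^\dagger)=R(T^*T)\oplus^\bot N(T)$ coincide), but it is exactly the kind of identity that degenerates to a mere inclusion in nearby situations --- compare Theorem \ref{representationofmpinverse}, where only $\subset$ holds on one side --- so your explicit caveat about domain bookkeeping is the right instinct and should stay if this were written out. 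The remaining ingredients are all sound: the dictionary ``$R(S)$ closed $\iff$ $D(S^\dagger)$ is the whole space $\iff$ $S^\dagger$ bounded'' (via $D(S^\dagger)=R(S)\oplus^\bot R(S)^\bot$ and the closed graph theorem), the transport of boundedness through adjoints and compositions, the applicability of the dictionary to $T^*$, $T^*T$, $TT^*$ via von Neumann's theorem, and the derivation of the two ``in this case'' identities from Proposition \ref{nullspaces}(4) once closedness of both ranges is known.
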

\begin{thm}\cite[Theorem 4.1]{shkgrperturb}\label{representationofmpinverse}
Let $T\in \mathcal C(H_1,H_2)$ be densely defined. Assume that
$R(T)$ is closed. Then
\begin{equation*}
(T^*T)^\dagger T^*\subset T^*(TT^*)^\dagger=T^\dagger.
\end{equation*}
\end{thm}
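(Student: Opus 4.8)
The plan is to obtain both relations directly from the algebraic identities for Moore--Penrose inverses recorded in Theorem \ref{propertiesofmpinverse}, namely $(TT^*)^\dagger = T^{*\dagger}T^\dagger$ and $(T^*T)^\dagger = T^\dagger T^{*\dagger}$ (parts (9) and (8)), combined with the projection formulas in Definition \ref{geninv} and the null/range identities in Proposition \ref{nullspaces}. The preliminary observation, and the only place where closedness of $R(T)$ enters, is this: by Proposition \ref{Minmod}, $R(T^*)$, $R(TT^*)$ and $R(T^*T)$ are all closed, so by part (4) of Theorem \ref{propertiesofmpinverse} each of $T^\dagger$, $T^{*\dagger}$, $(TT^*)^\dagger$ and $(T^*T)^\dagger$ is \emph{bounded and everywhere defined}. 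This tames the domain bookkeeping in the compositions below.

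For the equality $T^*(TT^*)^\dagger = T^\dagger$: using part (9), for every $x \in H_2$ we have $T^*(TT^*)^\dagger x = T^*\big(T^{*\dagger}(T^\dagger x)\big)$, and this makes sense because $T^\dagger x \in H_1 = D(T^{*\dagger})$ while $T^{*\dagger}(T^\dagger x) \in R(T^{*\dagger}) = C(T^*) \subseteq D(T^*)$ (part (2) of Theorem \ref{propertiesofmpinverse}). Applying property (1) of Definition \ref{geninv} to the operator $T^*$ gives $T^*(T^*)^\dagger = P_{\overline{R(T^*)}}$, so $T^*(TT^*)^\dagger x = P_{\overline{R(T^*)}}(T^\dagger x)$. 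Since $R(T^\dagger) = C(T) \subseteq N(T)^\bot = \overline{R(T^*)}$ by part (2) of Theorem \ref{propertiesofmpinverse} and Proposition \ref{nullspaces}(1), the projection fixes $T^\dagger x$, giving $T^*(TT^*)^\dagger x = T^\dagger x$ for all $x \in H_2$. Both operators have domain $H_2$, so they coincide.

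For the inclusion $(T^*T)^\dagger T^* \subset T^\dagger$: using part (8), and noting that $(T^*T)^\dagger$ is everywhere defined so that $D\big((T^*T)^\dagger T^*\big) = D(T^*)$, for $x \in D(T^*)$ we get $(T^*T)^\dagger T^* x = T^\dagger\big(T^{*\dagger}T^* x\big)$. Property (2) of Definition \ref{geninv} applied to $T^*$ yields $T^{*\dagger}T^* x = P_{N(T^*)^\bot} x = P_{\overline{R(T)}} x$ (Proposition \ref{nullspaces}(2)), and since $N(T^\dagger) = R(T)^\bot$ by part (1), $T^\dagger$ kills the $R(T)^\bot$-component of $x$, so $(T^*T)^\dagger T^* x = T^\dagger x$ for every $x \in D(T^*)$. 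As $D(T^*) \subseteq H_2 = D(T^\dagger)$, this is exactly the assertion $(T^*T)^\dagger T^* \subset T^\dagger = T^*(TT^*)^\dagger$.

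I do not expect any genuine difficulty here; the whole argument is bookkeeping once parts (8) and (9) are invoked. The one point requiring care is the asymmetry of the conclusion: $T^*(TT^*)^\dagger$ is the full, everywhere-defined bounded operator $T^\dagger$, while $(T^*T)^\dagger T^*$ merely restricts $T^\dagger$ to $D(T^*)$, so writing ``$\subset$'' rather than ``$=$'' is essential (with proper inclusion precisely when $T^*$ is unbounded). A secondary point is checking at each composition that the output of the inner operator lands in the domain of the outer one, which is exactly what the carrier descriptions $R(T^\dagger) = C(T) \subseteq D(T)$ and $R(T^{*\dagger}) = C(T^*) \subseteq D(T^*)$ guarantee.
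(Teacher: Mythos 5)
The paper does not prove this statement at all --- it is imported verbatim from the preprint \cite[Theorem 4.1]{shkgrperturb} --- so there is no internal proof to compare against. Your argument is correct and self-contained given the facts the paper does quote: Proposition \ref{Minmod} makes all four Moore--Penrose inverses bounded and everywhere defined, which legitimizes the domain computations; parts (8), (9) of Theorem \ref{propertiesofmpinverse} reduce both sides to compositions with $T^\dagger$ and $T^{*\dagger}$; and the projection identities $T^*T^{*\dagger}=P_{R(T^*)}$, $T^{*\dagger}T^*=P_{N(T^*)^\perp}$ together with $R(T^\dagger)=C(T)\subseteq \overline{R(T^*)}$ and $N(T^\dagger)=R(T)^\perp$ finish each half. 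You also correctly identify why the conclusion is an inclusion rather than an equality on the left-hand side: $(T^*T)^\dagger T^*$ only lives on $D(T^*)$. The one step you use silently is that $T^*T$ and $TT^*$ are themselves densely defined closed (indeed self-adjoint, by von Neumann's theorem) so that their Moore--Penrose inverses exist in the sense of Definition \ref{geninv}; this is standard and is assumed throughout the paper as well, but a one-line mention would make the argument airtight.
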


For more information on generalized inverses we refer to \cite{groetsch77,groetsch2007,nashed}.
%
%

\section{Main results}
For proving the main theorem (Theorem \ref{main}) we consider the
following results.

Let $H_1$ and $H_2$ be real Hilbert spaces, $T\in \mathcal
C(H_1,H_2)$ be densely defined with closed range. Let $K$ be a
closed convex cone in $D(T^*T)$ such that $K^*\subset D(T^*T)$. Let
$C=TK$ and $D=(T^\dagger)^*K^*$.

\begin{lemma}\label{uinco}
$u\in C^*\cap D(T^*) \Longrightarrow T^*u\in K^*.$
\end{lemma}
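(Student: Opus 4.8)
The plan is to unravel the definitions of the dual cones and use the defining relation $\langle Tx, y\rangle = \langle x, T^*y\rangle$ together with the characterization of $K^*$.

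\medskip

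\noindent\textbf{Approach.} Let $u \in C^* \cap D(T^*)$. We want to show $T^*u \in K^*$, i.e. $\langle T^*u, t\rangle \geq 0$ for every $t \in K$. First, since $u \in C^* = (TK)^*$, by definition of the dual cone we have $\langle u, s\rangle \geq 0$ for all $s \in C = TK$; that is, $\langle u, Tt\rangle \geq 0$ for all $t \in K$. The key step is now to move $u$ across the inner product: since $t \in K \subseteq D(T^*T) \subseteq D(T)$ and $u \in D(T^*)$, the adjoint relation $\langle Tt, u\rangle = \langle t, T^*u\rangle$ applies, giving $\langle t, T^*u\rangle \geq 0$ for all $t \in K$. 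By the definition of $K^*$, this says precisely that $T^*u \in K^*$.

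\medskip

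\noindent\textbf{Where the care is needed.} The only subtlety is checking that the adjoint identity is legitimately applicable, i.e. that the pairing $\langle Tt, u\rangle$ can be rewritten as $\langle t, T^*u\rangle$. This requires $t \in D(T)$ and $u \in D(T^*)$. The hypothesis $K \subseteq D(T^*T)$ gives $t \in D(T^*T) \subseteq D(T)$ (since $T^*T$ is the composition, its domain is contained in $D(T)$), and $u \in D(T^*)$ is assumed directly as part of $u \in C^* \cap D(T^*)$. So there is no real obstacle here; the lemma is essentially a formal manipulation, and the role of the intersection with $D(T^*)$ in the statement is exactly to make this manipulation valid. I would write this out in three short lines, being explicit that the containment $D(T^*T)\subseteq D(T)$ is what licenses applying $T$ to elements of $K$, and that membership of $u$ in $D(T^*)$ is what licenses applying $T^*$ to $u$.
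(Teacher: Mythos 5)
Your proposal is correct and is essentially the paper's own proof: both take $t\in K$, use $u\in C^*$ to get $\langle u,Tt\rangle\geq 0$, and move to $\langle T^*u,t\rangle\geq 0$ via the adjoint identity, which is exactly why the hypothesis $u\in D(T^*)$ appears in the statement. Your added remarks on domain bookkeeping ($K\subseteq D(T^*T)\subseteq D(T)$) are a sensible elaboration of what the paper leaves implicit.
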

\begin{proof}
Let $u\in C^*\cap D(T^*) \mbox{ and } r\in K.$ Then $ 0 \leq \langle
u,Tr\rangle = \langle T^*u,r\rangle. $
\end{proof}
\begin{lemma} \label{acutepositivity}
The following are equivalent :
\begin{enumerate}
\item $ C^*\cap D(T^*) \cap R(T)$ is acute. \label{acute}
\item For all $x,y\in D(T^*T)$ with $T^*Tx \in K^*, T^*Ty \in K^*$, the inequality $\langle
T^*Tx,y\rangle \geq 0$ holds. \label{positivitygram}
\end{enumerate}
\end{lemma}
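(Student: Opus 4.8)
The plan is to shuttle back and forth between a vector $u$ lying in the cone $C^{*}\cap D(T^{*})\cap R(T)$ and a vector $x\in D(T^{*}T)$ satisfying $T^{*}Tx\in K^{*}$, exploiting the relations $u=Tx$ and $T^{*}u=T^{*}Tx$, and invoking Lemma \ref{uinco} to control membership in $K^{*}$. Throughout, the one point needing care is that each vector produced must be checked to lie in the domain required for the adjoint identity $\langle Tz,w\rangle=\langle z,T^{*}w\rangle$ to be legitimate; this is where the standing hypotheses $K\subseteq D(T^{*}T)$, $K^{*}\subseteq D(T^{*}T)$ and the closedness of $R(T)$ get used.

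For the implication $(\ref{acute})\Rightarrow(\ref{positivitygram})$ I would start from $x,y\in D(T^{*}T)$ with $T^{*}Tx,\,T^{*}Ty\in K^{*}$ and first show that $Tx$ and $Ty$ both belong to $C^{*}\cap D(T^{*})\cap R(T)$: membership in $R(T)$ is immediate, membership in $D(T^{*})$ holds because $x\in D(T^{*}T)$, and for $Tx\in C^{*}=(TK)^{*}$ one takes an arbitrary $r\in K\subseteq D(T)$ and computes
\[
\langle Tx,Tr\rangle=\langle T^{*}Tx,r\rangle\ge 0,
\]
the inequality following from $T^{*}Tx\in K^{*}$ and $r\in K$. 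Acuteness then gives $\langle Tx,Ty\rangle\ge 0$, and since $\langle Tx,Ty\rangle=\langle T^{*}Tx,y\rangle$ (valid as $x\in D(T^{*}T)$, $y\in D(T)$), condition (\ref{positivitygram}) follows.

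For the converse $(\ref{positivitygram})\Rightarrow(\ref{acute})$ I would take $u,v\in C^{*}\cap D(T^{*})\cap R(T)$ and produce preimages by setting $x:=T^{\dagger}u$, $y:=T^{\dagger}v$ (defined since $u,v\in R(T)\subseteq D(T^{\dagger})$). Then $Tx=TT^{\dagger}u=P_{\overline{R(T)}}u=u$, and similarly $Ty=v$; hence $Tx=u\in D(T^{*})$ forces $x\in D(T^{*}T)$, and likewise $y\in D(T^{*}T)$. Next, $T^{*}Tx=T^{*}u\in K^{*}$ by Lemma \ref{uinco} applied to $u\in C^{*}\cap D(T^{*})$, and similarly $T^{*}Ty=T^{*}v\in K^{*}$, so condition (\ref{positivitygram}) applies to the pair $x,y$ and yields $\langle T^{*}Tx,y\rangle\ge 0$. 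Since $\langle T^{*}Tx,y\rangle=\langle Tx,Ty\rangle=\langle u,v\rangle$, acuteness follows.

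I do not expect a genuine obstacle beyond the domain bookkeeping indicated above: the lemma is precisely the dictionary between the geometric ``acuteness'' statement and the analytic ``positivity'' statement, with Lemma \ref{uinco} supplying the only nonobvious inclusion, namely $T^{*}u\in K^{*}$, needed in the nontrivial direction.
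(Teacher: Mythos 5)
Your proposal is correct and follows essentially the same route as the paper: both directions hinge on the identity $\langle Tx,Ty\rangle=\langle T^*Tx,y\rangle$ together with the observation that $T^*Tx\in K^*$ iff $Tx\in C^*$ for the relevant vectors. The only cosmetic difference is in $(\ref{positivitygram})\Rightarrow(\ref{acute})$, where you construct the preimage explicitly as $x=T^\dagger u$ and cite Lemma \ref{uinco}, while the paper takes an arbitrary preimage $u=Tx$ and rederives $T^*Tx\in K^*$ inline from $u\in C^*$.
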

\begin{proof}
(\ref{acute}) $\Longrightarrow$ (\ref{positivitygram}): Let $x,y\in
D(T^*T)$ satisfy $ T^*Tx\in K^* ~\mbox{and}~ T^*Ty\in K^* . $ For
$r\in K$, we have $Tr\in C$ and hence
\begin{equation*}
\langle Tx,Tr\rangle = \langle T^*Tx,r\rangle \geq 0.
\end{equation*}
So, $Tx\in C^*$. Similarly, we can show that $Ty\in C^*$. Since $C^*
\cap D(T^*)\cap R(T)$ is acute, we have $0\leq \langle Tx,Ty\rangle
= \langle T^*Tx,y\rangle.$

(\ref{positivitygram}) $\Longrightarrow$ (\ref{acute}): Let $u,v\in
C^*\cap D(T^*)\cap R(T).$ Let $u=Tx$ for some $x\in D(T)$. Since
$u\in D(T^*)$, $T^*u$ is defined. That is $x\in D(T^*T).$ Similarly,
$v=Ty$ for some $y\in D(T^*T).$

Next we show that $\langle u,v\rangle\geq 0.$  Since $u\in C^*$, for
$r\in K$ we have
\begin{equation*}
0\leq \langle Tx,Tr\rangle = \langle T^*Tx,r\rangle.
\end{equation*}
Thus $T^*Tx\in K^*$. With a similar argument, we can conclude that
$T^*Ty\in K^*$. By assumption,
\begin{equation*}
\langle
u,v\rangle=\langle Tx,Ty\rangle=\langle T^*Tx,y\rangle \geq 0.
\end{equation*}
Hence $C^*\cap D(T^*)\cap R(T)$ is acute.
\end{proof}
\begin{lemma}\label{dacute}
$D$ is acute if and only if $\langle r,(T^*T)^\dagger s\rangle \geq
0, \text{for every }\; r,s\in K^*.$
\end{lemma}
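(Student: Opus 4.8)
The plan is to reduce both sides of the asserted equivalence to a single statement about the bounded operator $(T^\dagger)^*$, and then observe that these two statements literally coincide. Since $R(T)$ is closed, Theorem \ref{propertiesofmpinverse}(4) shows that $T^\dagger$ is continuous, and because $D(T^\dagger)=R(T)\oplus^\bot R(T)^\bot=H_2$, the operator $T^\dagger$ is in fact bounded and everywhere defined; hence $(T^\dagger)^*=T^{*\dagger}=T^{\dagger *}$ (Theorem \ref{propertiesofmpinverse}(6)) is bounded and everywhere defined on $H_1$, and by Proposition \ref{Minmod} so is $(T^*T)^\dagger$. In particular $K^*\subseteq D\big((T^\dagger)^*\big)$, so that $D=(T^\dagger)^*K^*$ is unambiguous and no domain bookkeeping is needed in what follows.

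Next I would use Theorem \ref{propertiesofmpinverse}(\ref{grammpinverse}), namely $(T^*T)^\dagger=T^\dagger T^{*\dagger}$, together with $T^{*\dagger}=(T^\dagger)^*$, to write $(T^*T)^\dagger=T^\dagger(T^\dagger)^*$. Then for arbitrary $r,s\in K^*$ a single application of the adjoint identity in the real Hilbert space $H_2$ gives
\[
\langle r,(T^*T)^\dagger s\rangle=\big\langle r,\,T^\dagger\big((T^\dagger)^*s\big)\big\rangle=\big\langle (T^\dagger)^*r,\,(T^\dagger)^*s\big\rangle .
\]
This is the only computation of substance; everything else is formal.

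Finally, since $D=\{(T^\dagger)^*r:r\in K^*\}$, the pairs $(u,v)$ with $u,v\in D$ are exactly the pairs $\big((T^\dagger)^*r,(T^\dagger)^*s\big)$ with $r,s\in K^*$. Hence $D$ is acute, i.e. $\langle u,v\rangle\geq 0$ for all $u,v\in D$, precisely when $\langle (T^\dagger)^*r,(T^\dagger)^*s\rangle\geq 0$ for all $r,s\in K^*$, and by the displayed identity this is exactly the condition $\langle r,(T^*T)^\dagger s\rangle\geq 0$ for all $r,s\in K^*$. I do not anticipate any genuine obstacle here: the only two points that need a moment's verification are that the operators in play are bounded and everywhere defined (so the adjoint swap above is unconditional) and that $(T^*T)^\dagger$ factors as $T^\dagger(T^\dagger)^*$, both of which are handed to us by the preliminary results.
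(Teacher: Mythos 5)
Your proposal is correct and follows essentially the same route as the paper: both arguments rest on the factorization $(T^*T)^\dagger=T^\dagger(T^\dagger)^*$ from Theorem \ref{propertiesofmpinverse}(\ref{grammpinverse}) and the adjoint identity $\langle (T^\dagger)^*r,(T^\dagger)^*s\rangle=\langle r,T^\dagger(T^\dagger)^*s\rangle$, together with the observation that elements of $D$ are exactly the images $(T^\dagger)^*r$ with $r\in K^*$. Your added remarks on boundedness and everywhere-definedness of $T^\dagger$ are a welcome tightening of the domain bookkeeping that the paper leaves implicit.
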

\begin{proof}
Let $x,y\in D$. Then $x = (T^\dagger)^*r,\; y=(T^\dagger)^*s$ for
some $r,s\in K^*$. Then $D$ is acute if and only if
\begin{equation*}
0\leq \langle x,y\rangle=\langle
(T^\dagger)^*r,(T^\dagger)^*s\rangle
 =\langle r,T^\dagger(T^\dagger)^*s\rangle=\langle r,(T^*T)^\dagger s\rangle,
 \end{equation*}
 by (\ref{grammpinverse}) of Theorem \ref{propertiesofmpinverse}.
\end{proof}
We are now in a position to prove the main result of this paper.

\begin{thm}\label{main}
Let $T\in \mathcal C(H_1,H_2)$ be densely defined with closed range.
Let $K$ be a closed convex cone in $D(T^*T)$ with $T^\dagger
TK\subseteq K$. Let $C=TK$ and $D=(T^\dagger)^*K^*$. Then the
following conditions are equivalent:
\begin{enumerate}
 \item \label{1st} $(T^*T)^\dagger(K^*)\subseteq K$
\item \label{2nd} $C^*\cap D(T^*)\cap R(T)\subseteq C$
\item \label{3rd} $ D$  is acute
\item \label{4th}$ C^*\cap D(T^*) \cap R(T)$ is acute
\item \label{5th} $ T^*Tx\in P_{R(T^*)}(K^*)\Longrightarrow x\in K$
\item\label{6th} $T^*Tx\in K^*\Longrightarrow x\in K$.
\end{enumerate}

\end{thm}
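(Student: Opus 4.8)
The plan is to prove the equivalences by establishing a cycle of implications together with a couple of two-way arguments, using the three preceding lemmas as the backbone. The natural order is
\[
(\ref{6th})\Longleftrightarrow(\ref{5th})\Longleftrightarrow(\ref{1st})\Longrightarrow(\ref{2nd})\Longrightarrow(\ref{4th})\Longleftrightarrow(\ref{3rd})\Longrightarrow(\ref{1st}),
\]
so that Lemma \ref{acutepositivity} delivers $(\ref{2nd})\Rightarrow(\ref{4th})$ (indeed it gives $(\ref{4th})$ directly in the form of condition $(\ref{positivitygram})$ once one rewrites membership in $K^*$ appropriately), Lemma \ref{dacute} handles $(\ref{3rd})\Leftrightarrow(\ref{4th})$ after identifying the inner-product condition with acuteness of $C^*\cap D(T^*)\cap R(T)$, and Lemma \ref{uinco} is the tool that moves an element of $C^*\cap D(T^*)$ into $K^*$ so that hypothesis $(\ref{1st})$ or $(\ref{6th})$ can be applied.

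First I would treat the equivalence of $(\ref{1st})$, $(\ref{5th})$, $(\ref{6th})$, which are essentially reformulations of the monotonicity-type statement. For $(\ref{1st})\Rightarrow(\ref{6th})$: if $T^*Tx\in K^*$, set $s:=T^*Tx$; then $(T^*T)^\dagger s\in K$ by $(\ref{1st})$, and since $x\in D(T^*T)\subseteq C(T^*T)\oplus N(T^*T)$ one has $(T^*T)^\dagger T^*Tx=P_{N(T^*T)^\bot}x=P_{N(T)^\bot}x$; the hypothesis $T^\dagger TK\subseteq K$ (equivalently $P_{N(T)^\bot}K\subseteq K$, using $T^\dagger T=P_{N(T)^\bot}$) should let one promote $P_{N(T)^\bot}x\in K$ back to $x\in K$ — this is the point where I expect to lean on the carrier decomposition and the cone condition most delicately. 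For $(\ref{6th})\Rightarrow(\ref{1st})$: given $r\in K^*$, I want $(T^*T)^\dagger r\in K$; write $x:=(T^*T)^\dagger r$ and compute $T^*Tx=T^*T(T^*T)^\dagger r=P_{\overline{R(T^*T)}}r=P_{R(T^*)}r$, which lies in $K^*$ provided $P_{R(T^*)}K^*\subseteq K^*$; then $(\ref{6th})$ gives $x\in K$. The equivalence with $(\ref{5th})$ is then just the observation that $P_{R(T^*)}(K^*)$ and $K^*$ produce the same set of admissible right-hand sides $T^*Tx$, since $T^*Tx\in R(T^*T)=R(T^*)$ always, so $T^*Tx\in K^*$ iff $T^*Tx\in P_{R(T^*)}(K^*)$; I should double-check whether the intended reading of $(\ref{5th})$ really collapses to $(\ref{6th})$ or whether one direction needs the projection explicitly, and I would phrase the argument to cover both.

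Next, for $(\ref{1st})\Rightarrow(\ref{2nd})$: take $u\in C^*\cap D(T^*)\cap R(T)$, so $u=Tx$ with $x\in D(T^*T)$ (as in the proof of Lemma \ref{acutepositivity}). By Lemma \ref{uinco}, $T^*u=T^*Tx\in K^*$, hence by $(\ref{6th})$ (equivalent to $(\ref{1st})$) we get $x\in K$, and therefore $u=Tx\in TK=C$; this shows $C^*\cap D(T^*)\cap R(T)\subseteq C$. For $(\ref{2nd})\Rightarrow(\ref{4th})$: if $u,v\in C^*\cap D(T^*)\cap R(T)$ then by $(\ref{2nd})$ they lie in $C=TK$, say $u=Tr,v=Ts$ with $r,s\in K$, and acuteness follows once we know $\langle Tr,Ts\rangle\ge0$ for $r,s\in K$ — but this is exactly condition $(\ref{positivitygram})$ of Lemma \ref{acutepositivity} (with $T^*Tr,T^*Ts$ automatically in $K^*$ as computed inside that lemma's proof), so $(\ref{4th})$ holds; alternatively I may just invoke $C\subseteq C^{**}$ together with $C$ acute. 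Then $(\ref{3rd})\Leftrightarrow(\ref{4th})$ is Lemma \ref{dacute} combined with the dual-characterization $\langle r,(T^*T)^\dagger s\rangle=\langle Tx,Ty\rangle$ for appropriate preimages; and finally $(\ref{3rd})\Rightarrow(\ref{1st})$: acuteness of $D=(T^\dagger)^*K^*$ means $\langle r,(T^*T)^\dagger s\rangle\ge0$ for all $r,s\in K^*$ by Lemma \ref{dacute}, i.e. $(T^*T)^\dagger s\in K^{**}$ for all $s\in K^*$; since $K$ is a closed convex cone, $K^{**}=\overline{K}=K$, giving $(T^*T)^\dagger(K^*)\subseteq K$, which is $(\ref{1st})$. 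The main obstacle I anticipate is the bookkeeping around the projection hypotheses — making sure that $T^\dagger TK\subseteq K$ really yields the reverse implication $P_{N(T)^\bot}x\in K\Rightarrow x\in K$ under the standing assumption $K\subseteq D(T^*T)$, and that $K^*\subseteq D(T^*T)$ is what guarantees $(T^*T)^\dagger$ is defined on $K^*$ and that $P_{R(T^*)}(K^*)\subseteq K^*$ in the step proving $(\ref{6th})\Rightarrow(\ref{1st})$; these domain-and-projection subtleties, invisible in the bounded case, are where the "slight change" flagged in the introduction actually bites.
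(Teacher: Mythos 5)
Your architecture is close to the paper's: the same three lemmas do the same work, your (1)$\Rightarrow$(2), (6)$\Rightarrow$(1) and (3)$\Leftrightarrow$(4) steps match the paper's, and your (3)$\Rightarrow$(1) via $\langle r,(T^*T)^\dagger s\rangle\geq 0$ for all $r\in K^*$ hence $(T^*T)^\dagger s\in K^{**}=\overline{K}=K$ is actually cleaner than the paper's closing computation. But there is one genuine gap and one circular justification. The gap is (1)$\Rightarrow$(6), which in your scheme is the \emph{only} bridge connecting conditions (5) and (6) to the other four. From $T^*Tx\in K^*$ and (1) you obtain $(T^*T)^\dagger T^*Tx=P_{N(T)^\perp}x\in K$, and you then hope to promote this to $x\in K$ using $T^\dagger TK\subseteq K$. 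That hypothesis is the implication $x\in K\Rightarrow P_{N(T)^\perp}x\in K$ --- exactly the wrong direction --- and nothing in the hypotheses controls the $N(T)$-component of $x$, so the step cannot be completed as stated. The paper instead enters (5) from (4): writing $T^*Tx=P_{R(T^*T)}w$ with $w\in K^*$, it identifies $x$ with the minimal-norm least-squares solution $(T^*T)^\dagger w$ (this identification carries the same subtlety about the $N(T)$-component that you ran into, which the paper addresses only in a closing remark) and then, rather than trying to invert a projection, shows $\langle x,r\rangle=\langle(T^\dagger)^*w,(T^\dagger)^*r\rangle\geq 0$ for every $r\in K^*$ because both vectors lie in the acute cone $C^*\cap D(T^*)\cap R(T)$, whence $x\in K^{**}=K$. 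You need this idea (or an equivalent one) to bring (5) and (6) into the equivalence class; your (6)$\Rightarrow$(1) and (5)$\Leftrightarrow$(6) pieces are fine but one-directional bridges do not suffice.

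Two smaller repairs. First, your (1)$\Rightarrow$(2) currently routes through the unproved (6); make it self-contained as in the paper: for $u\in C^*\cap D(T^*)\cap R(T)$, Lemma \ref{uinco} gives $T^*u\in K^*$, Theorem \ref{representationofmpinverse} gives $T^\dagger u=(T^*T)^\dagger T^*u\in K$ by (1), and $u=TT^\dagger u\in TK=C$. Second, in (2)$\Rightarrow$(4) you justify $\langle Tr,Ts\rangle\geq 0$ by invoking condition (2) of Lemma \ref{acutepositivity}; that condition is \emph{equivalent} to (4), so this is circular, and the fallback ``$C$ acute'' is not available either. The fix is immediate: with $u=Tr\in C^*\cap D(T^*)$ and $v=Ts$, $s\in K$, Lemma \ref{uinco} gives $T^*Tr=T^*u\in K^*$, so $\langle u,v\rangle=\langle T^*Tr,s\rangle\geq 0$ directly from the definition of $K^*$.
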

\begin{proof}
(\ref{1st})$\Longrightarrow$ (\ref{2nd}): Let $u\in C^*\cap
D(T^*)\cap R(T)$. Then $u=Tp$ for some $p\in C(T)$.  Then $T^\dagger
u=T^\dagger Tp=P_{N(T)^\bot}p=p$.
 Since $u\in
D(T^*)$, by Theorem \ref{representationofmpinverse},
$T^{\dagger}u=(T^*T)^{\dagger}T^*u$. Set $z=T^\dagger u$. Then
$Tz=TT^\dagger u=P_{R(T)}u=u$. Also $T^*u\in K^*,$ by Lemma
\ref{uinco}. So by the assumption,
$z=(T^*T)^\dagger T^*u\in K.$ Thus $u\in C$.

(\ref{2nd})$\Longrightarrow$ (\ref{3rd}): Let $x=(T^\dagger)^*u$ and
$y=(T^\dagger)^*v$ with $u,v\in K^*$. Since
\begin{align*}
R((T^\dagger)^*)=R((T^*)^\dagger)=C(T^*)&=D(T^*)\cap N(T^*)^\perp\\
                                 &=D(T^*)\cap \overline{R(T)}\\
                                 &=D(T^*)\cap R(T),
                                 \end{align*}
 $x,y\in
D(T^*)\cap R(T)$. Let $r\in K.$ We have $r'=T^\dagger Tr \in K$ (as
$T^\dagger TK\subseteq K$). Then
\begin{equation*}
\langle x,Tr\rangle=\langle(T^\dagger)^*u,Tr\rangle=\langle
u,T^\dagger Tr\rangle=\langle u,r'\rangle \geq 0.
\end{equation*}
Thus $x\in C^*.$ Since $C^*\cap D(T^*)\cap R(T)\subseteq C,$ we have
$x\in C.$ Thus $x=Tp$ for some $p\in K$.

Finally, with $p'=T^\dagger Tp \in K,$ we have,
\begin{equation*}
\langle x,y\rangle=\langle Tp,(T^\dagger)^*v\rangle =\langle
T^\dagger Tp,v\rangle=\langle p',v\rangle \geq 0.
\end{equation*}
Hence $D$ is acute.

(\ref{3rd})$\Longrightarrow$ (\ref{4th}): Let $x,y$ be such that
$r=T^*Tx\in K^*$ and $s=T^*Ty\in K^*$. Since $D$ is acute, by Lemma
\ref{dacute},
\begin{align*}
0\leq \langle r,(T^*T)^\dagger s\rangle
 &=\langle T^*Tx,(T^*T)^\dagger T^*Ty\rangle \\
 &=\langle x,(T^*T)(T^*T)^\dagger (T^*T)y\rangle\\
 &=\langle x,(T^*T)y\rangle\\
 &=\langle T^*Tx,y\rangle.
 \end{align*}
By Lemma \ref{acutepositivity}, $C^*\cap D(T^*)\cap R(T)$ is acute.

(\ref{4th})$\Longrightarrow$ (\ref{5th}): Let $T^*Tx=P_{R(T^*)}w$
for some $w\in K^*$. Since, $R(T^*T)=R(T^*)$, we have
$T^*Tx=P_{R(T^*T)}w$. Hence $x=(T^*T)^\dagger w$ (By Definition \ref{equivalentdefn} ).

Let $r\in K^*.$ Then
\begin{equation*}
\langle x,r\rangle=\langle (T^*T)^\dagger w,r\rangle =\langle
T^\dagger (T^\dagger)^*w,r\rangle=\langle
(T^\dagger)^*w,(T^\dagger)^*r\rangle.
\end{equation*}
Set $u=(T^\dagger)^*w,
~v=(T^\dagger)^*r.$ Then, as was shown earlier, $u,v\in R(T)\cap
D(T^*)$. For $t\in K$, with $t'=T^\dagger Tt \in K,$ we have
\begin{equation*}
\langle u,Tt\rangle=\langle (T^\dagger)^*w,Tt\rangle =\langle
w,T^\dagger Tt\rangle =\langle w,t'\rangle \geq 0.
\end{equation*}
So $u\in C^*$. Along similar lines it can be shown that $v\in C^*$.
Thus for all $r\in K^*, ~\langle x,r\rangle=\langle u,v\rangle\geq
0$. So $x\in (K^*)^*=K.$

(\ref{5th})$\Longrightarrow$ (\ref{6th}): Choose $x$ such that
$T^*Tx\in K^*$. We have \begin{equation*}
T^*Tx=P_{R(T^*T)}(T^*Tx)=P_{R(T^*)}(T^*Tx)\in P_{R(T^*)}(K^*).
\end{equation*}
Hence by  (\ref{5th}), $x\in K$.

(\ref{6th})$\Longrightarrow$ (\ref{1st}): Let $u=(T^*T)^\dagger v$
with $v\in K^*$.

Then $T^*Tu=T^*T(T^*T)^\dagger v=P_{R(T^*)}v=T^\dagger Tv.$ Then for
$r\in K$ with $r'=T^\dagger Tr \in K,$ we have
\begin{equation*}
\langle T^*Tu,r\rangle=\langle T^\dagger Tv,r\rangle=\langle
v,T^\dagger Tr\rangle=\langle v,r'\rangle\geq 0.
\end{equation*}
Thus $T^*Tu\in K^*.$~ As (\ref{6th}) holds, $u\in K$. Thus
$(T^*T)^\dagger(K^*)\subseteq K.$

This completes the proof of the theorem.
\end{proof}

%
%
\begin{rmk}\hfill
\begin{enumerate}
\item In \cite{kurmisivakumargram}, conditions (5) and (6) were shown to
be equivalent to each other and also equivalent to the nonnegativity
of $(T^*T)^\dagger$ under an assumption that $x\in R(T^*)$ by
Kurmayya and Sivakumar. Here, we make a remark that the above
mentioned assumption is redundant to prove equivalence of those
conditions. If $ T^*Tx\in P_{R(T^*)}(K^*)$ then it can be shown that
$x\in R(T^*)$.
\item If $K\subseteq C(T^*T)$, then the condition $T^{\dagger}TK\subseteq K$ is satisfied automatically.
\item If $T$ is one-to-one, then $T^{\dagger}T=I$ and hence in this case $T^{\dagger}TK\subseteq K$ holds for any cone in $D(T)$.
\end{enumerate}

\end{rmk}

\section{Examples}
In this section, we illustrate Theorem \ref{main} with examples.
\begin{eg}
 Let $H=\ell^2$ and $D(T)=\Big\{(x_1,x_2,\dots)\in H \colon
\displaystyle \sum_{j=1}^{\infty} |jx_j|^2<
 \infty  \Big\}.$ Define $T:D(T)\rightarrow H$ by $$T(x_1,x_2,x_3,\dots,x_n,\dots)=
 (x_1,2x_2,3x_3,\dots,nx_n,\dots) \quad \text{for all}\; (x_1,x_2,\dots)\in D(T).$$
 Since  $D(T)$ contains $ c_{00}$, the space of all sequences having at most finitely many nonzero terms, we have $\overline{D(T)}=H.$ Clearly $T$ is unbounded and closed since $T^*=T$. By \cite[example 5.1]{knr}, $R(T)$ is closed. In fact, $T^{-1}$ exists and
 \begin{equation*}
 T^{-1}(y_1,y_2,y_3,\dots,y_n,\dots)=(y_1,\frac{y_2}{2},\frac{y_3}{3},\dots,\frac{y_n}{n},\dots),\; \text{for all}\;(y_n)\in H.
 \end{equation*}

 Note that $D(T^*T)={\{(x_n)\in H: \displaystyle \sum_{n=1}^\infty n^4|x_n|^4<\infty}\}$. Let
 \begin{equation*}
K={\{(x_n)\in D(T^2):x_n\geq 0\; \text{for all}\; n\in \mathbb N }\}.
\end{equation*}
Clearly, $K^*=K$ and $T^{\dagger }TK=K$. Hence $K$ satisfy the
Hypothesis of Theorem \ref{main}. In this case,
$D=T^{{\dagger}^*}(K^*)=T^{-1}(K)$. Let $x,y\in D$. Then
$x=T^{-1}u,\; y=T^{-1}v$ for some $u,v\in H$. Then Let
$u=\displaystyle \sum_{n=1}^\infty \langle u,e_n\rangle e_n$ and
$v=\displaystyle \sum_{n=1}^\infty \langle v,e_n\rangle e_n$ (Here
${\{e_n:n\in \mathbb N}\}$ is the standard orthonormal basis for
$H$). Then
\begin{align*}
\langle x,y\rangle &=\langle T^{-2}u,v\rangle\geq 0\\
                   &= \displaystyle \sum_{n=1}^\infty \frac{1}{n^2}\langle u,e_n\rangle \, \langle v,e_n\rangle \\
                   &\geq 0 \quad ( \text{since} \; \langle u,e_n\rangle,  \langle v,e_n\rangle\geq 0).
\end{align*}
Therefore $D$ is acute. Hence by Theorem \ref{main},
$(T^*T)^{\dagger}$ is nonnegative with respect to the cone $K$. This
can be easily verified independently by using the definition.
\end{eg}

 \begin{eg}
Let $H=\ell^2$ and $ D(T)=\big\{(x_1,x_2,\dots,x_n,\dots)\colon
\displaystyle \sum_{j=2}^\infty |jx_j|^2<\infty \big\}.$ Define
$T:D(T)\rightarrow H$ by
$$T(x_1,x_2,\dots,x_n,\dots)=\big(0,2x_2,3x_3,4x_4,\dots\big)\quad \text{for all}\quad(x_1,x_2,\dots)\in H.$$
Observe that $T$ is densely defined,  $T=T^*$ and
$N(T)={\{(x_1,0,0,\dots)\colon x_1 \in \mathbb {C}}\}$. Hence
$C(T)=\Big\{(0,x_2,x_3,\dots)\colon
\displaystyle\sum_{j=2}^\infty|jx_j|^2<\infty\Big\}$. We can show
that $R(T)$ is closed (see \cite[example 5.2]{knr} for details) and
\begin{equation*}
T^{\dagger}(y_1,y_2,y_3,\dots,)=\big(0,\frac{y_2}{2},\frac{y_3}{3},\dots\big),\; (y_n)\in \ell^2.
\end{equation*}

It can be seen that $T=T^*$ and $D(T^2)={\{(x_n)\in H: \displaystyle \sum_{n=2}^\infty n^4|x_n|^4<\infty}\}$.
Take \begin{equation*}
K={\{(x_n)\in D(T^2):x_n\geq 0\; \text{for all}\; n=2,3,\dots }\}.
\end{equation*}
 It is easy to verify that $K^*=K$ and $T^{\dagger} TK\subseteq K$. Also $D=T^{{\dagger}^*}(K^*)=T^{\dagger}(K)$. Let $x,y\in D$. Then $x=T^{\dagger}u,\; y=T^{\dagger}v$ for some $u,v\in H$. Then Let $u=\displaystyle \sum_{n=1}^\infty \langle u,e_n\rangle e_n$ and $v=\displaystyle \sum_{n=1}^\infty \langle v,e_n\rangle e_n$. Then
\begin{align*}
\langle x,y\rangle &=\langle T^{\dagger}u,T^{\dagger}v \rangle\geq 0\\
                   &= \displaystyle \sum_{n=2}^\infty \frac{1}{n^2}\langle u,e_n\rangle \, \langle v,e_n\rangle \\
                   &\geq 0 \quad ( \text{since} \; \langle u,e_n\rangle,  \langle v,e_n\rangle\geq 0).
\end{align*}
Therefore $D$ is acute. Hence by Theorem \ref{main}, $(T^*T)^{\dagger}$ is positive with respect to the cone $K$.
\end{eg}

\begin{eg}
Let $\mathcal {AC}[0,\pi]$ denote the space of all absolutely
continuous functions on $[0,\pi]$. Let
\begin{align*}
H&:= \text{The real space} \; L^2[0,\pi]\; \text{of real valued
functions}\\
H'&:=\Big\{ \phi \in \mathcal{AC}[0,\pi]: \phi' \in H \Big\},\\
H''&:={\{\phi\in H':\phi'\in H'}\}.
\end{align*}
Let $L:=\displaystyle \frac{d}{dt}$ with $D(L)={\{x\in H':
\phi(0)=\phi(\pi)=0}\}.$

It can be shown using the fundamental theorem of integral calculus
that $L\in \mathcal C(H)$. Let $\phi_n=\sin(nt),\;n\in \mathbb N$.
Then ${\{\phi_n:n\in \mathbb N}\}$ is an orthonormal basis for $H$
and is contained in $D(L)$, hence $L$ is densely defined. Also
$C(L)=D(L)$. i.e., $L$ is one-to-one. It can be shown that
$R(L)={\{y\in H: \int\limits\limits_0^\pi
y(t)\,{d}t=0}\}=\text{span}\,{\{1}\}^\bot$. Hence in this case
$D(L^\dagger)=H$. Let $\psi_n=\sqrt{\frac{2}{\pi}}\, \cos(nt),\;t\in
[0,\pi],\; n\in \mathbb N$. Then ${\{\psi_n:n\in \mathbb N}\}$ is an
orthonormal basis for $R(L)$.

We have, $L^*L=-\frac{d^2}{dt^2}$ with  $D(L^*L)={\{\phi \in
H'':\phi(0)=0=\phi(\pi) }\}$ \cite[page 349]{ben}.  By using the
projection method (see \cite[example 3.5]{shkgrgeninv}), we can show
that
\begin{equation}\label{repnofmpiofdiff}
L^\dagger(y)=\displaystyle \sum_{n=1}^\infty \frac{1}{n}\langle
y,\psi_n\rangle \phi_n.
\end{equation}

Let $K={\{\phi\in D(L^*L): \langle \phi,\phi_n\rangle \geq 0,\;
\text{for all}\; n\in \mathbb N}\}$. Then $K$ is a cone and $K^*=K$.
We verify condition \ref{1st} of Theorem \ref{main}. First note
that, by Equation \ref{repnofmpiofdiff}, we have

\begin{equation}\label{adjointofmpiofdiff}
L^{\dagger *}\phi=\displaystyle \sum_{n=1}^\infty \frac{1}{n}\langle \phi,\phi_n\rangle \psi_n, \;\; \text{for all}\; \phi \in H.
\end{equation}
Now, let $f\in K$. Then
\begin{align*}(L^*L)^{\dagger}(f)&=L^{\dagger}(L^{\dagger })^*(f)\\
                                 &=\displaystyle \sum_{n=1}^\infty \frac{1}{n^2}\langle f,\phi_n\rangle \phi_n.
\end{align*}
Since $f\in K$, we have $\langle f,\phi_n\rangle\geq 0$ for all
$n\in \mathbb N$ and so $\frac{1}{n^2}\langle f,\phi_n\rangle\geq 0$
for all $n\in \mathbb N$. This concludes that
$(L^*L)^{\dagger}(K^*)\subseteq K$.
\end{eg}

\vspace{.5cm} \noindent \textbf{Acknowledgements}: We thank Prof.K.C.
Sivakumar who studied this article thoroughly and helped us to
improve and bring it in this form.

\bibliographystyle{amsplain}

\end{document}